\newtheorem{theorem}{Theorem}
\newtheorem{claim}[theorem]{Claim}
\newtheorem{lemma}[theorem]{Lemma}
\newcommand{\F}{\mathcal{F}}
\newcommand{\G}{\mathcal{G}}
\newcommand{\T}{\mathcal{T}}
\begin{document}

\title{$\F$-Saturation Games}

\author[J. D. Lee]{Jonathan D. Lee}
\address{Department of Pure Mathematics and Mathematical Statistics, University of Cambridge, Wilberforce Road, Cambridge CB3\thinspace0WB, UK}
\email{j.d.lee@dpmms.cam.ac.uk}

\author[A. Riet]{Ago-Erik Riet}
\address{Institute of Mathematics, University of Tartu, Juhan Liivi 2, 50409 Tartu, Estonia}
\email{ago-erik.riet@ut.ee}
\thanks{The research was partially supported by institutional research funding IUT20-57 of the Estonian Ministry of Education and Research.}

\date{\today}
\subjclass[2010]{Primary 60K35; Secondary 60C05}

\begin{abstract}
We study \emph{$\F$-saturation games}, first introduced by F\"uredi, Reimer and Seress~\cite{FuReSe} in 1991, and named as such by West~\cite{We}. The main question is to determine the length of the game whilst avoiding various classes of graph, playing on a large complete graph. We show lower bounds on the length of path-avoiding games, and more precise results for short paths. We show sharp results for the tree avoiding game and the star avoiding game.
\end{abstract}

\maketitle

\section{Introduction}

For $\F$ a family of graphs, we say a graph $G$ is \emph{$\F$-free} if $G$ contains no member of $\F$ as a subgraph. We say $G\subset H$ is a \emph{$\F$-saturated} subgraph of $H$ if $G$ is a maximal $\F$-free subgraph of $H$. For a discussion of saturated graphs see for example Bollob\'as~\cite{Bo}. Take $H$ a graph, $|H| = n$, and let $\F$ be a family of graphs. Following the definition of the triangle free game of F\"uredi, Reimer and Seress~\cite{FuReSe}, and building on the notation of West\cite{We}, we define the \emph{$\F$-saturation game} as follows.

We have two players, \emph{Prolonger} and \emph{Shortener}, who we take to be male and female respectively. We define a graph process $\G_i$. We initially set $\G_0 = E_n$, the empty graph on $n$ vertices. The process ends at time $t^*$ if $G_{t^*}$ is a $\F$-saturated subgraph of $H$. Otherwise, at time $2t$ , Prolonger chooses an edge $uv \in H \backslash \G_{2t}$ and $\G_{2t}\cup uv$ is $\F$-free, and $\G_{2t+1} = G_{2t} \cup uv$. Similarly, at time $2t+1$ Shortener chooses an edge from $H\backslash \G_{2t+1}$ to add, such that the process remains $\F$-free. Prolonger's goal is to maximise $t^*$, whilst Shortener wishes to minimise $t^*$. Our results will not depend on which of the two players moves first, and so we refer to this game as $\G(H;\F)$. We say the value of $t^*$ under optimal play by both Prolonger and Shortener is the \emph{score} or \emph{game saturation number} of $\G(H;\F)$, denoted by $\textbf{Sat}(\G)$ or simply $\G$ provided there can be no confusion. When only one graph is excluded, we write $\G(H;F) := \G(H;\{F\})$.

F\"uredi, Reimer and Seress~\cite{FuReSe} concentrate on the game $\G(K_n,K_3)$. They exhibit
a strategy for Prolonger which demonstrates that $\G(K_n,K_3) \geq (\frac{1}{2} + o(1))n \log_2 n$. They attribute to Erd\H os a lost proof that Shortener has a strategy showing $\G(K_n,K_3) \leq \frac{n^2}{5}$. Bir\'o, Horn and Wildstrom~\cite{BiHoWi} show that $\G(K_n,K_3) \leq \frac{9n^2}{50}$, by showing that Shortener is able to cover almost all vertices with disjoint cycles of length 5, and that the number of edges between two such cycles is bounded by 18.

Motivated by these results, we study the case where $\F = \{P_k\}$, $\F$ is the class of all trees on $k$ vertices or $\F = \{K_{1,k}\}$.

\section{Our results}

As is standard, for any $k \in \mathbb{N}$ we denote a path on $k$ vertices by $P_k$. To illustrate the difficulties encountered by Prolonger, we first study a variant where on his turn, he is permitted to decline to pick any edge, and set $\G_{2t+1} = \G_{2t}$. Since Shortener is still required to add edges, this process will still become $\F$-saturated and thus have a score as defined for the $\F$-saturation game. We will refer to this game as $\G_{-P}$. Since we have given Prolonger additional options, it is clear that any strategy he might use in $\G$ is valid in $\G_{-P}$, and so we have that $\G_{-P}(H;\F) \geq \G(H;\F)$

\begin{theorem}\label{G_P-}
For all $n \geq k$, we have $\frac{1}{4}n(k - 2) \leq \G_{-P}(K_n; P_k) \leq \frac{1}{2}n(k-1)$.
\end{theorem}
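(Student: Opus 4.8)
The plan is to treat the two bounds separately: the upper bound is essentially immediate, whereas the lower bound is the substance and requires exhibiting a strategy for Prolonger.

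For the upper bound, note that the terminal graph $\G_{t^*}$ is a maximal $P_k$-free subgraph of $K_n$, hence in particular $P_k$-free, and the score is its number of edges. By the classical theorem of Erd\H{o}s and Gallai, a $P_k$-free graph on $n$ vertices (no path with $k-1$ edges) has at most $\frac{1}{2}n(k-2)$ edges, and $\frac{1}{2}n(k-2)\le\frac{1}{2}n(k-1)$. Thus $\G_{-P}(K_n;P_k)\le\frac{1}{2}n(k-1)$ regardless of how either player plays; indeed this argument gives the slightly stronger bound $\frac{1}{2}n(k-2)$, so the stated inequality has room to spare.

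For the lower bound I would give Prolonger a strategy that forces the final graph to be dense. The guiding heuristic is that a vertex-disjoint union of cliques $K_s$ with $s=\lceil k/2\rceil$ is $P_k$-free (each clique has only $s\le k-1$ vertices) and carries $\binom{s}{2}$ edges on $s$ vertices, that is $\frac{s-1}{2}\ge\frac{k-2}{4}$ edges per vertex; covering the vertex set by such cliques would already realise $\frac{1}{4}n(k-2)$ edges. The decline option is exactly what makes this attainable: Prolonger is never compelled to play a harmful connecting edge and may simply wait. Concretely, Prolonger's rule is to complete components to cliques while keeping them small: on his turn, if some component of size at most $k-1$ is not yet complete he adds a missing edge inside it, and otherwise he declines. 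The first key observation is that such an internal edge is always legal, since any graph on at most $k-1$ vertices is automatically $P_k$-free, so completing a small component can never create a $P_k$. The second is a freezing phenomenon: once a component is a clique $K_s$ with $s=\lceil k/2\rceil$, any edge joining it to a vertex that lies on a path of $\lceil k/2\rceil$ vertices would produce a path on at least $2\lceil k/2\rceil\ge k$ vertices and is therefore illegal; in particular two completed cliques can never be joined, so they partition off their vertices permanently.

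The main obstacle is controlling Shortener's attempts to reach a sparse saturated configuration, and here the delicate point is pinning the constant at exactly $\frac{1}{4}$. Shortener cannot profitably merge two near-complete small components, because either the merge creates a $P_k$ (illegal) or it produces a component of size at most $k-1$ that Prolonger then fills to a clique. Her only real recourse is to force \emph{large} components that stay $P_k$-free by remaining bushy (star-like, with cut vertices), since such components can be made sparse, at a rate close to one edge per vertex rather than $\frac{k-2}{4}$. The crux of the argument is therefore to show that Prolonger can prevent this: by using his moves to drive the longest path inside any growing component up towards $k-1$, he makes further merging edges illegal and freezes the component at bounded size, after which he densifies it. I would formalise this via a charging scheme assigning to each vertex the edges of the clique it ultimately lies in, verifying that every vertex is charged at least $\frac{k-2}{4}$ up to a lower-order error from the $O(k)$ leftover or partially filled vertices, which is absorbed since the target bound is not tight. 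Establishing that Shortener cannot escape into large sparse components, and that the resulting per-vertex charge is genuinely $\frac{k-2}{4}$ rather than something smaller, is where the balance between the clique size $\lceil k/2\rceil$ and the legality threshold for merging must be handled carefully, and is the hardest part of the proof.
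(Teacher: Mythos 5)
The upper bound is fine and matches the paper: both invoke Erd\H{o}s--Gallai. The lower bound, however, has a genuine gap. Your write-up explicitly defers the decisive step (``I would formalise this via a charging scheme\dots is the hardest part of the proof''), so nothing is actually proved; and the difficulty you are deferring is real, because the strategy you propose does not let Prolonger keep pace with Shortener. Each time Shortener attaches a new vertex to a component that Prolonger has just completed to a clique $K_s$, restoring completeness costs him $s-1$ further edges, yet he gets only one move before Shortener attaches another pendant. Since a clique of size $\lceil k/2\rceil$ with many pendant vertices still has longest path only about $\lceil k/2\rceil+2<k$, Shortener can legally grow such a component far past size $k-1$ at a rate of one edge per new vertex, at which point your rule tells Prolonger to ignore it entirely. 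This is precisely the ``large sparse component'' escape you identify, and neither the strategy nor the missing charging argument rules it out.

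The paper's proof avoids this by asking much less of Prolonger. His invariant is not that small components are complete but that every component is \emph{everywhere traceable} (each vertex starts a Hamiltonian path). Maintaining this costs exactly one edge per Shortener move: when she merges two everywhere traceable components, the result has a Hamiltonian path, and Prolonger adds the single edge closing it into a Hamiltonian cycle (legal because the component has fewer than $k$ vertices, so every new path stays inside it). The densification you were trying to force by hand then comes for free from saturation: at the end, every component spans a Hamiltonian path and hence has size $<k$, so every internal edge is legal and each component must be a complete clique; moreover any two components are everywhere traceable and so could be joined by a legal edge unless their sizes sum to at least $k$. A minimum-degree computation on a disjoint union of cliques with pairwise size-sums at least $k$ then yields $2e\ge\max(k-2-\delta,\delta)(n-\delta-1)+\delta(\delta+1)\ge\frac{1}{2}n(k-2)$. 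If you want to rescue your approach, the thing to prove is an invariant Prolonger can restore with a single edge per turn; completeness of components is not such an invariant, but traceability is.
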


Returning to $\G(K_n, P_k)$, we have results only for small values of $k$. Whilst these results are quite precise, they are predicated on a complete categorisation of the connected $P_k$-saturated graphs. Obtaining results of this precision for larger $k$ thus seems challenging. Recently Carraher, Kinnersley, Reiniger and West \cite{Ca} made us aware of another proof, only in the $P_4$ case, which gives a slight improvement on the additive constants.
\begin{theorem}\label{P4-bound}
For all $n > 0$, we have $\frac{4}{5}n-\frac{8}{5} \leq \G(K_n, P_4) \leq \frac{4}{5}n + 1$.
\end{theorem}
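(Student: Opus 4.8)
The plan is to reduce the game to a transparent token dynamics and then exhibit matching strategies for the two players. I would begin by recording the classification that underlies all of these results: a connected $P_4$-free graph is either a star $K_{1,m}$ (allowing the degenerate cases of an isolated vertex $K_{1,0}$ and a single edge $K_{1,1}$) or a triangle $K_3$, since a connected graph with no path on four vertices is either a tree of diameter at most $2$, hence a star, or contains a cycle, which must then be a triangle with nothing attached. Consequently every position reached in $\G(K_n,P_4)$ is a disjoint union of stars and triangles, and a short check shows that the only legal moves are: (M1) joining two isolated vertices into an edge; (M2) attaching an isolated vertex to the centre of a star $K_{1,m}$ with $m\ge 1$, forming $K_{1,m+1}$; and (M3) closing a cherry $K_{1,2}$ into a triangle. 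Each move adds exactly one edge, so the score equals the number of edges of the final saturated graph.

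The arithmetic backbone is the identity that the final number of edges equals $n$ minus the number of non-triangle components, since each isolated vertex, each edge and each star contributes deficiency $\lvert V\rvert-\lvert E\rvert=1$ while each triangle contributes $0$. Thus Prolonger seeks to minimise, and Shortener to maximise, the number of non-triangle components surviving at the end, and both bounds amount to showing that this number is about $n/5$. This is realised cleanly either by a union of copies of $K_{1,4}$, or in the balanced regime by roughly $n/5$ triangles together with roughly $n/5$ surviving edges.

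For the lower bound I would have Prolonger funnel material into triangles: on each turn he plays M3 if a cherry is available, otherwise M2 to turn an existing edge into a cherry, otherwise M1, growing a large star only as a last resort. The key is a \emph{race} argument. Each triangle Prolonger builds costs two of his moves and consumes one isolated vertex together with one existing edge, whereas new edges appear at a rate of at most one per move; balancing these rates as the $n$ isolated vertices are used up leaves about $n/5$ triangles and about $n/5$ surviving edges, that is, at most $n/5+O(1)$ non-triangle components, giving score at least $\frac45 n-\frac85$. Symmetrically, for the upper bound Shortener simply creates a fresh edge (M1) whenever at least two isolated vertices remain, cleaning up afterwards; she thereby produces about $2n/5$ edge-components, and since Prolonger can remove an edge-component only by spending two of his moves to convert it into a triangle, the same move-parity limits him to about $n/5$ such conversions, so at least $n/5-O(1)$ edge-components survive and the score is at most $\frac45 n+1$.

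The main obstacle will be turning this heuristic race into rigorous inequalities with the stated additive constants and checking robustness against \emph{all} opponent deviations. I would track a potential function counting isolated vertices, edges and cherries, and show it changes predictably under each player's strategy no matter what the opponent does; in particular this must rule out the tempting alternative for Prolonger of concentrating on one giant star, which the count shows is strictly worse for him. The delicate points are the endgame once the supply of isolated vertices runs low, the parity of the total number of moves and the verification that the first player's identity is irrelevant, and the exceptional saturated configuration consisting of a single isolated vertex together with triangles; careful handling of these boundary effects is precisely what produces the constants $-\frac85$ and $+1$ rather than a clean $\frac45 n$.
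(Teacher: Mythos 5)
Your reduction of the score to $n$ minus the number of non-triangle components is correct and tidy, your classification of the legal moves (M1)--(M3) is right, and your lower-bound plan --- Prolonger prioritises closing cherries into triangles and one counts isolated vertices consumed per pair of moves --- is essentially the paper's argument; the case analysis you defer to a potential function is exactly the bookkeeping the paper carries out. The genuine gap is in the upper bound. The Shortener strategy you propose, namely play (M1) whenever two isolated vertices remain and ``clean up afterwards,'' does not work, and no tuning of constants will save it. Against it, Prolonger converts every fresh edge into a cherry $K_{1,2}$ and never closes one: each round then consumes three isolated vertices (two for Shortener's new edge, one for Prolonger's extension), so after about $2n/3$ moves the graph is a disjoint union of roughly $n/3$ cherries with no isolated vertices left. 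At that point the \emph{only} legal move for either player is to close a cherry into a triangle, so Shortener is forced to spend her remaining moves completing triangles for Prolonger, and the game ends with about $n/3$ triangles and score close to $n$, far above $\frac{4}{5}n+1$. Your move-parity count (``Prolonger can remove an edge-component only by spending two of his moves'') fails precisely here: in the forced endgame Shortener's own moves do the removing; and the bound of ``about $n/5$ conversions'' is circular in any case, since it presupposes the total number of moves, which is the quantity being bounded.

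The missing idea is that Shortener's \emph{first} priority must be to neutralise cherries: whenever a $K_{1,2}$ component exists she extends it to a $K_{1,3}$, which can never again participate in a triangle. This is step (i) of the paper's Shortener strategy, and the accompanying structural claim --- that at most one $K_{1,2}$ component exists at any time and that Shortener completes a triangle only as the final, saturating move --- is what makes the component count $\lambda\geq\frac{1}{5}n$, and hence the bound $\frac{4}{5}n+1$, go through. Only with this defusing rule in place does creating fresh isolated edges become her productive secondary move.
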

\begin{theorem}\label{P5-bound}
For all $n > 0$, we have $n-1 \leq \G(K_n,P_5) \leq n+2$.
\end{theorem}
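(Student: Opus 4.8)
The plan is to combine a structural classification of the connected $P_5$-saturated graphs with two explicit strategies, one for each player, that control a single combinatorial parameter of the final position.

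First I would record the classification of connected $P_5$-saturated graphs. Since a connected graph is $P_5$-free exactly when its longest path has at most $4$ vertices, a short case analysis on a longest path shows that the connected $P_5$-saturated graphs are precisely $K_1, K_2, K_3, K_4$, the double stars $S_{a,b}$ with $a,b\ge 2$, and the graphs obtained from a star $K_{1,m}$ ($m\ge 4$) by adding a single edge between two leaves. For such a graph $C$ I define its \emph{excess} $\mathrm{exc}(C)=e(C)-(v(C)-1)$: the trees ($K_1,K_2$ and the double stars) have excess $0$, while $K_3$ and the "star-plus-edge" graphs have excess $1$, and $K_4$ alone has excess $3$. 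A $P_5$-saturated subgraph $G$ of $K_n$ is a disjoint union of such components, and summing excesses gives the key identity $e(G)=n-t+2f$, where $t$ is the number of excess-$0$ (tree) components and $f$ the number of $K_4$ components. I would also record the elementary \emph{blocking criterion}: writing $a(u)$ for the number of vertices on a longest path ending at $u$ within its component, an edge joining $u$ and $v$ in distinct components creates a $P_5$ precisely when $a(u)+a(v)\ge 5$. Every nontrivial saturated component has $\min_u a(u)\ge 3$, so any two of them are automatically mutually blocked and can never be merged once both are complete.

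By the identity $e(G)=n-t+2f$, the theorem reduces to two purely combinatorial goals: for the lower bound I must show Prolonger can force $t\le 2f+1$, and for the upper bound that Shortener can force $2f\le t+2$. For the lower bound I would have Prolonger keep at most one tree component locked in (except for boundary effects). The blocking criterion is what makes this possible: a component that is still a star, or a double star one of whose centres has fewer than two leaves, is not yet blocked, since its centre $c$ has $a(c)=2$; hence whenever two such unfinished components coexist Prolonger may legally join their low-$a$ vertices, which merely merges them into a larger double star without creating a $P_5$, or he may add an internal edge raising a component's excess to at least $1$. Thus Prolonger answers any attempt by Shortener to complete a second tree component either by merging it away or by converting it into an excess-$\ge 1$ component, forcing $t\le 2f+1$ and hence $\G(K_n,P_5)\ge n-1$.

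For the upper bound the roles reverse: Shortener must stop $K_4$'s from proliferating, since each contributes $+2$ to $e(G)$. The only route to a $K_4$ is through a \emph{diamond} ($K_4$ minus an edge), whose four vertices are already mutually blocked from the rest and admit exactly one further internal edge, forcing completion to $K_4$. Shortener's strategy is therefore never to create a diamond on her own move and, on each of her turns, to close off a component as a sparse double star (excess $0$), generating enough tree components to pay for the few $K_4$'s Prolonger can still force; the accounting then yields $2f\le t+2$ and $\G(K_n,P_5)\le n+2$. The main obstacle, and exactly where the additive constants $-1$ and $+2$ are won or lost, is the bookkeeping in these two strategies: one must track which partially built components are still "open" (unblocked, with a vertex of $a$-value $2$) versus "locked", control the parity of who is forced to complete a given component, and verify that at most one exceptional component survives on each side. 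Everything else is the routine case-checking behind the classification and the blocking criterion.
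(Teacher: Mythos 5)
Your overall architecture (classify the connected $P_5$-saturated components, count edges via the excess $e(C)-v(C)+1$, and give one strategy per player) is the paper's, and your lower-bound plan is essentially the paper's Prolonger strategy: he triangulates or merges the at most one ``open'' tree-like component per round so that every final component except one isolated edge or vertex has excess at least $1$. (A small slip there: the light centre of a $D_{1,l}$ has longest ending path of $3$ vertices, not $2$, so such a component cannot be merged with another nontrivial component --- only triangulated; your fallback covers this.) The genuine gap is in the upper bound. Your Shortener strategy --- never create a diamond herself, and build sparse double stars to ``pay for'' the $K_4$'s via the inequality $2f\le t+2$ --- fails for two reasons. First, refusing to create diamonds does not stop Prolonger: on his own moves he can play $ab,ac,bc,ad,bd$ on fresh vertices, and already at the fifth edge the component is a diamond, which is externally blocked and internally forced to become $K_4$; a passive Shortener therefore concedes $\Theta(n)$ disjoint $K_4$'s and $e(\G_{t^*})\ge n+\Theta(n)$. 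Second, the trade-off accounting cannot be rescued by rates: a $K_4$ costs Prolonger $6$ moves for $+2$ in $e-n$, while a $D_{2,2}$ costs Shortener at least $5$ moves for $-1$, so $2f-t$ grows linearly even if she builds trees unimpeded --- and she cannot, since Prolonger may always add an edge between two leaves of any partial star, converting it to excess $\ge 1$.

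The missing idea is that Shortener must prevent $K_4$'s outright ($f\le 1$), not compensate for them, and the mechanism is that a $P_5$-free component on $5$ or more vertices can never contain a $K_4$ (a $K_4$ plus any attachment contains a $P_5$). Hence her top priorities must be to attach a fifth vertex to any $4$-vertex component ($P_4$, $K_{1,3}$, paw) the moment it appears, and to manage isolated edges and isolated vertices so that she always can (in particular, at most one $4$-vertex component and a controlled number of isolated edges exist at any time --- this is the content of the paper's Claim~\ref{P5-structure}). With at most one $K_4$ and every other component satisfying $e(C)\le v(C)$ by the classification, the bound $n+2$ follows. Without this defusing step your upper-bound argument does not go through.
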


For larger classes of graphs, we have substantially precise bounds for all $k$. We define $\T_k$ to be the family of all trees on $k$ vertices. 
\begin{theorem}\label{Tree-bound}
For all $n$, $k$, we have:
\[\begin{array}{lll} \\
\G(K_n, \mathcal{T}_k) &
  \begin{array}{l}
  = \lfloor\frac{n}{k-1}\rfloor {\binom{k-1}{2}} + {\binom{n-(k-1)\lfloor\frac{n}{k-1}\rfloor}{2}}
  \end{array}
& n \not\equiv 1 \mod (k-1)\\
\G(K_n, \mathcal{T}_k) &
  \begin{array}{l}
  \leq \frac{n}{k-1} {\binom{k-1}{2}} \\
  \geq \frac{n}{k-1} {\binom{k-1}{2}} - (k-3)
  \end{array}
& n \equiv 1 \mod (k-1),\; k \geq 3.\\
\end{array}\]
\end{theorem}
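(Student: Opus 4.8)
The plan is to reduce the whole game to the analysis of a single clique partition and then to produce a strategy for Prolonger; the upper bounds will follow with essentially no work.

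First I would prove the structural lemma that a subgraph $G\subseteq K_n$ is $\T_k$-saturated if and only if $G$ is a disjoint union of cliques of sizes $a_1,\dots,a_m$ with every $a_i\le k-1$ and $a_i+a_j\ge k$ for all $i\ne j$. Indeed, $G$ is $\T_k$-free exactly when every component has at most $k-1$ vertices, since a connected graph on at least $k$ vertices contains a spanning tree and hence a subtree on exactly $k$ vertices; maximality then forces each component to be complete (any internal non-edge could be added without enlarging a component) and forbids joining any two components (so any two component sizes must sum to at least $k$). The crucial consequence is that the terminal position of the game is always such a union of cliques, so the final score equals $\sum_i\binom{a_i}{2}$, a quantity depending only on the final partition of $n$. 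Thus the entire game is a fight over which partition into parts of size at most $k-1$ is reached, and since $\sum_i\binom{a_i}{2}=\tfrac12\bigl(\sum_i a_i^2-n\bigr)$, Prolonger wants to concentrate the mass while Shortener wants to spread it.

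Next I would dispose of the upper bounds. Because $x\mapsto\binom{x}{2}$ is convex, $\sum_i\binom{a_i}{2}$ subject to $\sum_i a_i=n$ and $a_i\le k-1$ is maximised at the extreme partition with $\lfloor\frac{n}{k-1}\rfloor$ parts equal to $k-1$ and one part of size $r:=n-(k-1)\lfloor\frac{n}{k-1}\rfloor$, giving $\lfloor\frac{n}{k-1}\rfloor\binom{k-1}{2}+\binom{r}{2}$. Since the terminal position is itself a saturated partition, its score is at most this maximum; this is precisely the stated value in the case $n\not\equiv 1$, and after the crude estimate $\binom{r}{2}\le\frac{r}{k-1}\binom{k-1}{2}$ it is at most $\frac{n}{k-1}\binom{k-1}{2}$, which is the stated bound when $n\equiv 1\pmod{k-1}$. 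No reference to strategy is needed for either inequality.

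The substance is therefore the lower bound: I must show Prolonger can force a partition that is extremal, or nearly so, which (by convexity again) amounts to forcing at most one component of size $<k-1$, equivalently minimising the number of components, equivalently performing as many merges as possible while keeping components small enough to stay mergeable. Prolonger's strategy is to concentrate greedily: he drives a single designated clique up to the full size $k-1$ — absorbing small components (legal precisely because a clique of size $a\le k-2$ plus a singleton still has at most $k-1$ vertices) and completing its internal edges — before permitting a second deficient clique to persist, so that at the end at most one component falls short of $k-1$. When $n\not\equiv 1$ this produces exactly the extremal partition and meets the upper bound. The main obstacle, and the technical heart of the argument, is the interaction between the two types of move: Shortener can coalesce vertices into a medium clique of size roughly $k/2$, after which any two such cliques lock against each other and can never be merged, and she can spend internal-edge completions as tempo moves to hand the critical merges back to Prolonger. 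The crux is to show, via a potential or parity count comparing merge moves against internal moves, that Prolonger can always prevent two distinct deficient components from locking in simultaneously; the single unavoidable exception is the lone leftover vertex when $n\equiv 1$, which cannot be absorbed into any full clique, and tracking the damage this does is exactly what produces the bounded slack of $k-3$ in that case.
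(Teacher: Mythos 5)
Your structural characterisation of $\T_k$-saturated graphs and your convexity argument for the upper bounds are correct and match the paper. Your lower-bound strategy is also the right one in spirit (the paper has Prolonger repeatedly join the two largest components whose total size is at most $k-1$, which is exactly your ``concentrate greedily into one designated deficient clique''). But the proof of the lower bound is not actually there: you correctly identify that ``the crux is to show \dots\ that Prolonger can always prevent two distinct deficient components from locking in simultaneously,'' and then you stop, gesturing at ``a potential or parity count'' that you never construct. That invariant is the entire content of the lower bound, so as written this is a genuine gap, not a detail.

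For what it is worth, no potential function is needed. The paper proves by a direct induction the invariant that after each of Prolonger's moves there is at most one component $C$ with $1<|C|<k-1$ (with one exceptional terminal configuration). The case analysis is short because Shortener adds only one edge per turn, so given the invariant she can do only three things: create one new isolated edge, attach one isolated vertex to $C$, or leave all component sizes unchanged; in each case Prolonger restores the invariant by merging the (at most two) small pieces, or by topping $C$ up to size $k-1$ with an isolated vertex when $|C|=k-2$. Your feared scenario of Shortener assembling a second medium clique of size about $k/2$ cannot occur, precisely because she would need many moves to do so and Prolonger absorbs each fragment as it appears. Finally, your account of the $-(k-3)$ slack is imprecise: the loss does not come from ``a lone leftover vertex that cannot be absorbed'' (a partition into full $(k-1)$-cliques plus one isolated vertex is saturated and loses nothing), but from the one unavoidable terminal configuration consisting of an isolated edge, a $K_{k-2}$, and full $(k-1)$-cliques, which has exactly $\binom{k-1}{2}-\binom{k-2}{2}-1=k-3$ fewer edges than the extremal partition. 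You would need to exhibit this configuration explicitly and verify it is the only way the invariant can fail to terminate in the extremal partition.
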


Carraher, Kinnersley, Reiniger and West \cite{Ca} consider the case $n=k$ of the preceding Theorem and determine the precise score. In fact, the primary constraint of $\T_k$ saturation is to exclude a $K_{1,k-1}$. If only this graph is excluded, we have a precise bound:
\begin{theorem}\label{Star-bound}
For $n \geq (3k+1)(k-2)$, we have the following bounds:
\[
\frac{1}{2}kn \geq \G(K_n, K_{1,k+1}) \geq \frac{1}{2}\left(kn-2(k-1)\right). 
\]
\end{theorem}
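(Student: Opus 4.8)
The plan is to prove matching upper and lower bounds on the game where the forbidden graph is the star $K_{1,k+1}$. The crucial observation is that being $K_{1,k+1}$-free simply means every vertex has degree at most $k$, so the process terminates precisely when the graph is a maximal graph of maximum degree $\leq k$, i.e.\ when no edge can be added without creating a vertex of degree $k+1$. Thus the final graph $G_{t^*}$ has the property that every pair of non-adjacent vertices includes at least one vertex of degree exactly $k$. The score $t^*$ is exactly the number of edges in this terminal graph, so both players are really fighting over the number of edges in the final near-$k$-regular graph, and I would phrase everything in terms of degrees and edge counts via the handshake identity $\sum_v \deg(v) = 2|E|$.

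For the upper bound $\G(K_n,K_{1,k+1}) \leq \tfrac12 kn$, I would exhibit a strategy for Shortener. Since the game cannot continue once every vertex has degree $k$, the trivial ceiling is that the maximum number of edges in any graph of maximum degree $\leq k$ on $n$ vertices is $\lfloor \tfrac12 kn\rfloor \leq \tfrac12 kn$; this already caps $t^*$ regardless of play, so the upper bound is essentially immediate and requires no clever strategy from Shortener at all. The real content is the lower bound $\G(K_n,K_{1,k+1}) \geq \tfrac12(kn - 2(k-1))$, for which I would construct an explicit strategy for Prolonger that forces the game to last long, i.e.\ forces the terminal graph to have close to $\tfrac12 kn$ edges, leaving at most $k-1$ vertices of deficient degree. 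First I would show that the only way the game ends "early" is if a set of low-degree vertices becomes mutually saturated, meaning every pair among them is already joined by an edge while their degrees are still below $k$; Prolonger's job is to keep the number of such stranded low-degree vertices bounded by $k-1$.

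Prolonger's strategy should be designed so that he always maintains a large pool of vertices of degree strictly less than $k$ that are pairwise non-adjacent, so that further edges can always be added. A natural approach is to have Prolonger play edges that keep the low-degree vertices spread out: whenever possible he adds an edge between two vertices that are currently of low degree and far from saturation, thereby using up "slack" efficiently and preventing Shortener from concentrating high degrees on a few vertices and stranding the rest. I would track the potential function given by the number of vertices of degree less than $k$ and argue that as long as at least $k$ such vertices remain, at least one addable edge exists; the terminal configuration then has at most $k-1$ deficient vertices, each missing at most $k-1$ edges, giving the total edge count at least $\tfrac12(kn - (k-1)(k-1))$ or so, which I would sharpen to the claimed $\tfrac12(kn - 2(k-1))$ by a more careful count of how far below $k$ the stranded vertices can be and by using the hypothesis $n \geq (3k+1)(k-2)$ to guarantee enough room. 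The main obstacle will be the combinatorial bookkeeping in the lower bound: showing that Prolonger can genuinely confine the deficient set to size $k-1$ against an adversarial Shortener who is trying to create many low-degree vertices that are pairwise adjacent, and verifying that the hypothesis on $n$ is exactly what prevents Shortener from achieving an early mutual saturation; I expect the parity and floor issues in passing from the crude $(k-1)^2$-type loss to the sharp $2(k-1)$ loss to require the most care.
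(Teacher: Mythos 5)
Your upper bound is fine and is exactly the paper's: a $K_{1,k+1}$-free graph has $\Delta \leq k$, so the terminal graph has at most $\tfrac12 kn$ edges regardless of play. Your framing of the lower bound is also correct as far as it goes: in a $K_{1,k+1}$-saturated graph the vertices of degree $<k$ form a clique, and the score is $\tfrac12\bigl(kn - \sum_{v}(k-d(v))\bigr)$. But note that this observation alone already bounds the clique of deficient vertices by $k$ members (a clique of size $m$ forces each member to have degree $\geq m-1 < k$), and optimizing $m(k-m+1)$ only yields the crude bound $\G \geq \tfrac12 kn - \tfrac12\bigl(\tfrac{k+1}{2}\bigr)^2$, which the paper states \emph{before} the theorem as the easy estimate. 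So your stated target --- ``confine the deficient set to size $k-1$, each missing at most $k-1$ edges'' --- is both automatic (the size bound) and far too weak (a $(k-1)^2$-type loss versus the claimed $2(k-1)$). The quantity that must actually be controlled is not the \emph{number} of deficient vertices but their \emph{degrees}: Prolonger must force the terminal minimum degree up to $k-2$, after which the clique structure gives total deficiency at most $2(k-1)$. The sentence where you say you would ``sharpen'' $(k-1)^2$ to $2(k-1)$ by ``a more careful count'' is precisely where the entire proof lives, and nothing in your proposal supplies it.

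Concretely, the paper has Prolonger greedily join two vertices of current minimum degree (lexicographic order on degree pairs), and then runs a potential argument: let $t_i$ be the first time the minimum degree reaches $i$, let $g_i = \sum_v \max(d(v)-i,0)$ be the excess degree and $\lambda_i$ the number of vertices of degree $>i$ at time $t_i$. A degree count over the phase from $t_i$ to $t_{i+1}$ yields the recursion $g_{i+1} \leq g_i + (n+2k+2) - 2g_i/(k-i)$, and comparing with the explicit solution $f_i = i(n+2k+2)\tfrac{k-i}{k-1}$ of the corresponding recurrence shows that $t_i$ exists for all $i \leq k-2$ exactly when $i \leq n(k-1)/(n+3k+1)$ for all such $i$, which is where the hypothesis $n \geq (3k+1)(k-2)$ enters. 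Your proposal gestures at the right strategy (keep adding edges between low-degree vertices) and correctly guesses that the hypothesis on $n$ is needed to give Prolonger ``enough room,'' but it contains no mechanism for proving that the minimum degree actually climbs to $k-2$ against adversarial play, which is the substantive content of the theorem. One further small error: you claim an addable edge exists whenever at least $k$ vertices of degree $<k$ remain; the correct threshold is $k+1$, since $k$ deficient vertices forming a clique with every other vertex at degree $k$ is already saturated.
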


\section{Avoiding \texorpdfstring{$P_k$}{P\_k} in \texorpdfstring{$\G_{-P}$}{\G\_-P}}

\begin{proof}[Proof of Theorem \ref{G_P-}]
The upper bound is the saturation result of Erd\H{o}s and Gallai from \cite{ErGa}. To obtain the lower bound, we exhibit a strategy for Prolonger that guarantees the required length of game. We say a graph is \emph{everywhere traceable} if for every vertex $v$ in the graph there is a Hamiltonian path starting at $v$. Hence if a graph is Hamiltonian it is everywhere traceable. We will show that the following strategy for Prolonger guarantees that the score will be large enough:
\begin{enumerate}[i)]
\item If there is a component $C$ which is not everywhere traceable, he finds a Hamiltonian path $P$ in $C$ and adds the edge which augments $P$ to a Hamiltonian cycle;
\item Otherwise he does not add an edge on his turn.
\end{enumerate}

To prove this, we first show the following auxiliary claim.
\begin{claim}
After his move, every connected component is everywhere traceable.
\end{claim}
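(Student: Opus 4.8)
The plan is to prove this by induction on the sequence of Prolonger's moves, taking as the inductive hypothesis precisely the statement of the Claim: immediately after each of Prolonger's moves, every connected component is everywhere traceable. The base case is immediate, since before any edge is played every component is a single vertex, which is trivially everywhere traceable (and if Prolonger is to move first, he declines by rule ii)). For the inductive step I would consider the position just after one of Prolonger's moves, where by hypothesis all components are everywhere traceable, and track how this property survives Shortener's intervening move and Prolonger's subsequent response.

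The crucial observation is that Shortener adds exactly one edge $uv$, and this can damage at most one component. If $u$ and $v$ lie in the same component, then since edges are only ever added, every Hamiltonian path that witnessed everywhere-traceability before Shortener's move is still present; so that component, and hence every component, remains everywhere traceable and Prolonger simply declines. The only substantive case is when $uv$ joins two distinct components $C_1 \ni u$ and $C_2 \ni v$. Here I would show that the merged component is at least \emph{traceable}: using the inductive hypothesis, pick a Hamiltonian path of $C_1$ starting at $u$ and a Hamiltonian path of $C_2$ starting at $v$, reverse the first, and concatenate the two across the new edge $uv$ to obtain a Hamiltonian path of $C_1 \cup C_2$. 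Since all other components are untouched, after Shortener's move at most one component — the merged one — fails to be everywhere traceable, and that component is at least traceable.

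Prolonger's strategy is then designed exactly to repair this single component: by rule i) he takes a Hamiltonian path $P$ of the merged component and adds the edge closing $P$ to a Hamiltonian cycle, after which the component is Hamiltonian and therefore everywhere traceable, while all other components are left untouched and remain everywhere traceable by hypothesis. Thus the invariant is restored. (The augmenting edge is legal: as the merged component has a Hamiltonian path and the position is $P_k$-free, it has at most $k-1$ vertices, so closing a cycle on at most $k-1$ vertices creates no $P_k$.) The main obstacle to anticipate is that everywhere-traceability is \emph{not} preserved under merging two everywhere-traceable components — a Hamiltonian path prescribed to start at some $w \in C_1$ need not end at $u$, and so need not extend across $uv$ — which is precisely why Prolonger cannot rely on the merge alone and must actively close a Hamiltonian cycle. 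The two facts that make the argument succeed are that merging nonetheless preserves mere traceability, via the reversal-and-concatenation trick above, and that a single edge can corrupt only one component.
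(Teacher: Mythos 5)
Your proposal is correct and follows essentially the same route as the paper: induct on the invariant, observe that Shortener's edge can merge at most two everywhere-traceable components into one component that still has a Hamiltonian path (your reversal-and-concatenation argument is exactly the justification the paper leaves implicit), and have Prolonger close that path into a Hamiltonian cycle, with legality following from the component having fewer than $k$ vertices. The one detail worth stating explicitly, which the paper does note, is that when the merged component fails to be everywhere traceable it is in particular not Hamiltonian, so the endpoints of the chosen Hamiltonian path are genuinely non-adjacent and the closing edge is available to add.
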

\begin{proof}
Induct on the number of edges in the graph. Hence we may assume that after his previous move, all the components were everywhere traceable. As the base case, note that the empty graph and an isolated edge are everywhere traceable, so regardless of who moves first, Prolonger will choose to add no edges and leave the graph satisfying the claim.

After Shortener's move, Prolonger is faced with a graph $G$. If her move did not alter the component structure of the process, then every component is still everywhere traceable and he will add no edges, satisfying the claim. Her move altered at most 2 components by connecting them, which produces a single component $C$ which is not everywhere traceable. Since $C$ was formed by joining two everywhere traceable components by an edge, we know that $C$ contains a Hamiltonian path $P$. Since after her move the graph is $P_k$-free, we know that $|P| = |C| < k$.

Since by assumption $C$ is not everywhere traceable, we have that $|C| > 2$ and that the endpoints $u, v$ of $P$ are not adjacent as $C$ is not Hamiltonian. Since $|C| < k$, any path using the edge $uv$ in $G \cup uv$ is contained in $V(C)$, and so has length less than $k$. So $G \cup uv$ is $P_k$-free, and Prolonger may add this edge. The component $C \cup uv$ is Hamiltonian and thus everywhere traceable. Hence after his move, Prolonger leaves every connected component everywhere traceable.
\end{proof}

Hence in $\G_{t^*}$ , the total number of vertices in any two components is $\geq k$, as otherwise these two components could be joined by an edge. Since all components are Hamiltonian, every component is of size less than $k$ and so will be complete. Hence the sum of degrees of any two disconnected vertices is at least $k - 2$. Hence taking $\delta$ the minimum degree of $\G_{t^*}$, we have:
\[2\G \geq \max(k - 2 - \delta, \delta)(n - \delta - 1) + \delta(\delta + 1) = \max(k - 2 - 2\delta, 0)(n - \delta - 1) + \delta n\]
which is minimised by taking $\delta = \lfloor\frac{k - 2}{2}\rfloor$. Checking $k$ even or odd, and recalling that $k < n$ yields $\G \geq\frac{1}{4}n(k-2)$ as required.
\end{proof}

In fact, the notion of ensuring that all components remain everywhere traceable almost allows for an effective strategy for Prolonger in $\G(K_n; P_k)$. The only point at which Prolonger could not guarantee to leave every component everywhere traceable is when when the graph consists of a disjoint union of cliques. In this case, his move necessarily leaves a component which is not everywhere traceable. This could be exploited by Shortener to produce a large induced star with each vertex attached to a long path, and any path between vertices of the star passing through the central vertex. This permits disconnected vertices to have degrees summing to less than $k-1$, and thus in principle to push $\G$ below $\frac{1}{4}n(k-2)$.

\section{The game \texorpdfstring{$\G(K_n, P_4)$}{G(K\_n, P\_4)}}

We now turn to a detailed examination of the game $\G(K_n, P_4)$ and Theorem~\ref{P4-bound}. Let us begin with the following characterisation of $P_4$-saturated graphs, which is easily seen by inspection:

\begin{lemma}\label{P4_saturated}
A $P_4$ saturated graph is either a vertex-disjoint union of triangles and stars with at least two vertices or a vertex-disjoint union of triangles and an isolated vertex (cf. Figure \ref{P4-cmpts}).
\end{lemma}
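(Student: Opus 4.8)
The plan is to prove the characterization by analyzing what structural constraints $P_4$-freeness and saturation jointly impose on a graph $G \subset K_n$. The key observation is that a connected graph containing no $P_4$ (a path on four vertices, i.e.\ three edges) is extremely restricted: I would first argue that any connected $P_4$-free graph is either a triangle $K_3$ or a star $K_{1,m}$. Indeed, a connected graph on at least two vertices has a spanning tree; if that tree contained a $P_4$ we would be done, so the tree must have diameter at most $2$, forcing it to be a star. Adding any further edge to a star $K_{1,m}$ with $m \geq 3$ creates a $P_4$ among two leaves and the centre, so the only connected $P_4$-free graphs are stars and the single extra edge one may add to $K_{1,2}$, which yields the triangle. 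This disposes of the ``$P_4$-free'' half of the statement on a per-component basis.

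Next I would impose saturation, i.e.\ maximality: no edge of $K_n$ may be added without creating a $P_4$. The content here is a constraint \emph{between} components and \emph{within} the star components. First, between two components $C_1, C_2$: if both have at least two vertices, then picking a vertex in each that is an endpoint of some edge (a leaf or centre) lets one add a cross-edge that extends an existing edge into a $P_3$ and then a $P_4$, contradicting saturation --- unless at least one component is a single vertex. Working this out carefully shows that at most one component can fail to be an edge-bearing piece, and that an isolated vertex can coexist only with triangles, since an isolated vertex adjacent (after adding an edge) to a leaf of a star $K_{1,m}$ with $m\ge 2$ would create a $P_4$-free augmentation, violating maximality. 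This is the crux: I expect the main obstacle to be the careful bookkeeping of exactly which cross-component and intra-star edges can or cannot be added, and thereby pinning down why stars and an isolated vertex cannot appear together, while stars and triangles can.

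Finally I would assemble the two possible global shapes. Saturation forbids a lone isolated vertex from sitting beside any star of size $\geq 2$ (one could always join them), so either every non-triangle component is a star on at least two vertices with no isolated vertices present, or there are no genuine stars and the leftover is a single isolated vertex accompanying a union of triangles. Checking that each described configuration is genuinely saturated --- that every absent edge of $K_n$ really does close up a $P_4$ --- completes the equivalence. I would present the whole argument as a short case analysis ``by inspection,'' as the lemma advertises, emphasizing the per-component classification followed by the global maximality constraints, with Figure~\ref{P4-cmpts} serving to make the two families transparent.
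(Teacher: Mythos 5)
The paper offers no written proof of this lemma (the proof environment is left empty and the statement is justified as ``easily seen by inspection''), so the only real question is whether your argument is sound. Your decomposition is the natural and surely the intended one: first classify the connected $P_4$-free graphs as isolated vertices, stars $K_{1,m}$, and the triangle (your spanning-tree/diameter argument for this is correct, including the observation that $K_{1,2}$ is the only star to which an edge can be added without creating a $P_4$), and then use maximality to constrain which components can coexist. That first half is fine.

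Two specific steps in the second half are wrong as written, though. First, you say that between two components each with at least two vertices one can ``add a cross-edge that extends an existing edge into a $P_3$ and then a $P_4$, contradicting saturation.'' This is backwards: an added edge that \emph{creates} a $P_4$ is exactly what saturation demands of every non-edge. The correct content of this case is that \emph{every} cross-edge between two components of size at least two creates a $P_4$ (each endpoint already has a neighbour in its own component), so such components may freely coexist in a saturated graph; no constraint arises here. Second, to exclude an isolated vertex $v$ alongside a star $K_{1,m}$ you join $v$ to a \emph{leaf} and call the result a ``$P_4$-free augmentation''; for $m\ge 2$ it is not, since $v$, that leaf, the centre and a second leaf form a $P_4$. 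The correct witness is the edge from $v$ to the \emph{centre}, which yields $K_{1,m+1}$ and is genuinely $P_4$-free (for $m=1$ either endpoint works). These are easily repaired, but as stated the key exclusion step fails. Finally, your closing sentence claims the converse --- that every configuration of the stated form is saturated --- which is false: a $K_{1,2}$ component can be completed to a triangle without creating a $P_4$. The lemma asserts only the forward implication, so the converse should simply be dropped rather than ``checked.''
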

\begin{proof}
\end{proof}

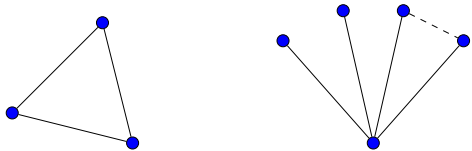
\begin{figure}[h]
\scalebox{.8}{
  \begin{tikzpicture}
    \tikzstyle{every node}=[draw,circle,fill=blue,minimum size=0.2cm,
                            inner sep=0pt]
    \node (p1) at ( 0, 0.5) {}; 
    \node (p2) at ( 2, 0) {};
    \node (p3) at ( 1.5,2) {};
    \node (q1) at ( 6,0) {};
    \node (q2) at ( 4.5,1.7) {};
    \node (q3) at ( 5.5, 2.2) {};
    \node (q4) at ( 6.5,2.2) {};
    \node (q5) at ( 7.5,1.7) {};
    \begin{scope}[every path/.style={-}]
       \draw (p1) -- (p2);
       \draw (p2) -- (p3);
       \draw (p3) -- (p1);
       \draw (q1) -- (q2);
       \draw (q1) -- (q3);
       \draw (q1) -- (q4);
       \draw (q1) -- (q5);
       \draw[dashed] (q4) -- (q5);

    \end{scope}  
  \end{tikzpicture}
}
\captionof{figure}{Maximal components of a $P_4$ saturated graph}
\label{P4-cmpts}
\end{figure}

This straightforward lemma leads to reasonably good bounds on the score, as we can exactly track which components could form.

\begin{proof}[Proof of Theorem \ref{P4-bound}]
The upper bound is demonstrated by considering the following strategy for Shortener. She will:

\begin{enumerate}[i)]
\item extend a $K_{1,2}$ to a $K_{1,3}$ if possible, otherwise 
\item draw an isolated edge if possible, otherwise
\item extend a star by attaching the central vertex to an isolated vertex if possible, otherwise
\item extend a $K_{1,2}$ to a $K_3$.
\end{enumerate}

\begin{claim}\label{P4-structure}
After Prolonger's move, there is at most one $K_{1,2}$ component.
Shortener will not complete the $K_{1,2}$ to a $K_3$, unless this makes the graph
$P_4$ saturated. After Shortener's move, there is at most one $K_{1,2}$ component.
If there is a $K_{1,2}$ component after Shortener's move, Prolonger will extend it to a $K_3$ and make the
graph $P_4$ saturated.\end{claim}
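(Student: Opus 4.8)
The plan is to prove all four assertions by a single induction on the number of moves, carrying a strengthened invariant. First I would record the local structure forced by $P_4$-freeness: since every position in the game is $P_4$-free, the same inspection behind Lemma~\ref{P4_saturated} shows that each component is an isolated vertex, an isolated edge $K_{1,1}$, a star $K_{1,m}$ with $m\ge 2$ (where $K_{1,2}$ is precisely the path $P_3$), or a triangle $K_3$. From this I would extract two elementary facts used throughout: (a) the only $P_4$-free edges incident to a $K_{1,2}$-component $a-b-c$ (with centre $b$) are the edge from the centre $b$ to an isolated vertex, which yields $K_{1,3}$, and the edge $ac$, which yields $K_3$; and (b) a new $K_{1,2}$-component can appear only by joining an isolated vertex to an endpoint of an isolated edge. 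In particular each single move creates at most one new $K_{1,2}$.

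Next I would isolate the decisive sub-lemma: \emph{if the current position has no isolated vertex, then the only available $P_4$-free move is to complete some $K_{1,2}$ to a $K_3$.} This follows by checking the remaining candidate edges: no edge can be added inside a triangle or inside a star $K_{1,m}$ with $m\ge 3$ without creating a $P_4$, and any edge between two components of size at least two produces a $P_4$ of the form $a'-a-b-b'$ using neighbours $a',b'$ in the two components. Consequently, when no isolated vertex is present, completing the \emph{unique} $K_{1,2}$ (if one exists) leaves no $K_{1,2}$ and no isolated vertex, hence a saturated graph by Lemma~\ref{P4_saturated}. This yields the saturation content of the claim: whenever Shortener is driven to rule~iv, priorities i--iii have all failed, which forces the absence of isolated vertices, so her completion saturates the graph (assertion two); and symmetrically a $K_{1,2}$ left after her move can only be completed by Prolonger, which likewise saturates (assertion four).

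The induction itself would carry the invariant: \emph{after every move there is at most one $K_{1,2}$-component, and moreover after each of Shortener's moves a surviving $K_{1,2}$ implies that no isolated vertex remains.} For Shortener's move I would argue that a $K_{1,2}$ can survive only if it was freshly created by rule~iii applied to an isolated edge; since rule~ii had to fail for rule~iii to be reached, at most one isolated vertex was present and it is consumed, leaving none, which both re-establishes the strengthened invariant and proves assertion three. A pre-existing $K_{1,2}$ cannot survive: rule~i extends it to $K_{1,3}$ when an isolated vertex exists, and otherwise rule~iv completes it. For Prolonger's move I would use the strengthened invariant at the previous Shortener step: if a $K_{1,2}$ was present then there are no isolated vertices, so by the sub-lemma he is forced to complete it and cannot build a second one (the terminal move of assertion four); and if no $K_{1,2}$ was present then by fact (b) he creates at most one. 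Thus assertion one holds in either case, and the base case (the empty graph, with either player moving first) is immediate since a first move produces at most an isolated edge.

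The main obstacle is the coupling between the assertions through the strengthened invariant: the bound \enquote{at most one $K_{1,2}$ after Prolonger's move} is not preserved by the bare existence statement, since in principle Prolonger could adjoin an isolated vertex to an isolated edge alongside an already-present $K_{1,2}$. The crux is therefore to show that a $K_{1,2}$ can survive Shortener's move only in a position devoid of isolated vertices, which simultaneously removes the raw material for a second $K_{1,2}$ and forces the game into its terminal triangle-completing move. Verifying the sub-lemma's case analysis carefully — especially that no inter-component edge is $P_4$-free once isolated vertices are gone — is the one place where the full $P_4$-free structure must be used.
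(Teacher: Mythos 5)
Your proof is correct and follows essentially the same route as the paper's: an induction with a case split on whether a $K_{1,2}$ component and an isolated vertex are present, using the facts that a new $K_{1,2}$ can only arise by attaching an isolated vertex to an isolated edge and that, once no isolated vertices remain, the only legal move is to complete a $K_{1,2}$ into a $K_3$. You simply make explicit (as a sub-lemma and a strengthened invariant) what the paper's proof uses implicitly.
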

\begin{proof}
We proceed by induction.
\begin{enumerate}[1.]
\item Suppose that $G_i$ has a $K_{1,2}$ component after Prolonger's move.. 
\begin{enumerate}[1)] 
\item If there is an isolated vertex in $G_i$, Shortener will extend the $K_{1,2}$ to a
$K_{1,3}$. Hence there is no $K_{1,2}$ component in $G_{i+1}$, and there can be at
most one after Prolonger's next move to $G_{i+2}$.
\item If there is no isolated vertex in $G_i$, Shortener will extend the $K_{1,2}$ to a
$K_3$. Two components of size $> 1$ cannot be joined without creating a
$P_4$. Hence no further components can be joined or extended, and the
graph is $P_4$ saturated.
\end{enumerate}
\item Suppose that $G_i$ has no $K_{1,2}$ component after Prolonger's move.
\begin{enumerate}[1)]
\item If Shortener creates a $K_{1,2}$ component, then $G_i$ contained exactly one
isolated vertex. Hence all components are now of size > 1, so
Prolonger can only complete the $K_{1,2}$ to a $K_3$.
\item Otherwise there are no $K_{1,2}$ components in $G_{i+1}$, and Prolonger can
produce at most one in $G_{i+2}$.
\end{enumerate}
\end{enumerate}
This finishes the proof.
\end{proof}
By Claim \ref{P4-structure}, until $t^*$, Shortener has ensured that the graph is a vertex disjoint union of stars. Let there be $\lambda$ components in $G_{t^*}$. Since there is at most 1 triangle, the score is bounded above by $n + 1 - \lambda$, with $n - \lambda$ moves producing non-trivial components (i.e. creating isolated edges) or extending stars. To prevent her from making a new non-trivial component by case (ii) of her strategy, Prolonger must make a $K_{1,2}$, which occurs at most once for each component of $\G_{t^*}$. Hence at most $\lambda$ of Shortener's moves fail to make a non-trivial component. Hence there are at least $\frac{1}{2} (n - \lambda) - \lambda$ components. So $\lambda\geq \frac{1}{5}n$, and the score is at most $\frac{4}{5}n + 1$.

The lower bound is demonstrated by considering the following strategy
for Prolonger. He will:
\begin{enumerate}[i)]
\item complete a triangle component if possible, otherwise 
\item complete a $K_{1,2}$ component if possible, otherwise
\item extend a star component if possible, otherwise 
\item draw an isolated edge. 
\end{enumerate}
Note that Prolonger is forced to play an isolated edge only as the first move or after Shortener completes a triangle.  We say that a move \emph{uses $k$ new vertices} if the number of isolated vertices is reduced by $k$ as a result of that move.

We first claim that if Prolonger creates a $K_{1,2}$ component in $\G_i$, at most 2 isolated vertices are used between $\G_i$ and $\G_{i+2}$. If Shortener plays elsewhere, Prolonger will extend the $K_{1,2}$ to a $K_3$. If Shortener extends the $K_{1,2}$ to a $K_3$, Prolonger can make an arbitrary move. If Shortener extends the $K_{1,2}$ to a $K_{1,3}$ then Prolonger can extend that to a $K_{1,4}$. In all cases at most two new vertices are used.

Note that if Prolonger can create a $K_{1,2}$ component when creating $\G_i$ but does not, then he must extend a $K_{1,2}$ into a $K_3$. Hence at most 2 new vertices are used between $\G_{i-2}$ and $\G_i$. 

If Prolonger cannot create a $K_3$ or $K_{1,2}$ component then either there are no isolated edges in $\G_{i-1}$, or there are no isolated vertices in $\G_{i-1}$. Hence Shortener uses at most one isolated vertex from $\G_{i-2}$ or $\G_{i-1}$ is $P_4$ saturated. Prolonger uses 2 new vertices only if he adds an isolated edge to form $\G_i$, which requires that Shortener completed a triangle into $\G_{i-1}$ and used no new vertices. Hence either 1 new vertex is used to end the game or at most 2 new vertices are used between $\G_{i-2}$ and $\G_i$. 

Note that with this strategy of Prolonger when $\G_i$ is created from $\G_{i-2}$ we never use 4 new vertices. Furthermore, we use 3 new vertices only if in $\G_{i-2}$ there was no $K_{1,2}$ component and in $\G_i$ there is. As a consequence, no two consecutive pairs of moves by Shortener and then Prolonger both use 3 new vertices. If Prolonger moves first, then his first move consumes two new vertices. If Shortener makes the last move last then her move may consume two new vertices. If there are an odd number of pairs of moves by Shortener and then Prolonger we may have 1 more pair using 3 new vertices than 2. In the worst case, all of these occur, the score would be $2(2k+1)+2$ with $n = 5k + 3 + 2 + 2 = 5k + 7$, and so the score would be $\frac{4}{5}(n-2)$. This completes the proof of the lower bound.
\end{proof}

\section{The game \texorpdfstring{$\G(K_n, P_5)$}{G(K\_n, P\_5)}}

We now turn to a detailed examination of the game $\G(K_n, P_5)$ and Theorem~\ref{P5-bound}. Denote a double star with $k$ pendant edges at one end of the central edge and $l$ at the other by $D_{k,l}$. Denote a triangle with $k$ pendant edges at one vertex by $T_k$ (cf. Figure~\ref{dt}).
\begin{figure}[h]
\scalebox{.8}{
  \begin{tikzpicture}
    \tikzstyle{vertex}=[draw,circle,fill=blue,minimum size=0.2cm,inner sep=0pt]

    \node[vertex] (r1) at ( 11,0) {};
    \node[vertex] (r2) at ( 9.3,-1.5) {};
    \node[vertex] (r3) at ( 8.8, -0.5) {};
    \node[vertex] (r4) at ( 8.8,0.5) {};
    \node[vertex] (r5) at ( 9.3,1.5) {};
    \node[vertex] (r6) at ( 13, 1.2) {};
    \node[vertex] (r7) at ( 13, -1.2) {};

    \begin{scope}[every path/.style={-}]
       \draw (r1) -- (r2);
       \draw (r1) -- (r3);
       \draw (r1) -- (r4);
       \draw (r1) -- (r5);
       \draw[dashed] (r4) -- (r5);
       \draw (r6) -- (r7);
       \draw (r7) -- (r1);
       \draw (r1) -- (r6);
     \end{scope} 
\draw [decorate,decoration={brace,amplitude=10pt}] (8.6,-1.8) -- (8.6,1.8) node [midway,xshift=-0.6cm] {\footnotesize $k$};
\node at (11,-2.5) { $T_k$};

    \node[vertex] (q1) at ( 17,0) {};
    \node[vertex] (q2) at ( 15.3,-1.5) {};
    \node[vertex] (q3) at ( 14.8,-0.5) {};
    \node[vertex] (q4) at ( 14.8, 0.5) {};
    \node[vertex] (q5) at ( 15.3, 1.5) {};
    \node[vertex] (p1) at ( 19,0) {};
    \node[vertex] (p2) at ( 20.7,-1.5) {};
    \node[vertex] (p3) at ( 21.2,-0.5) {};
    \node[vertex] (p4) at ( 21.2,0 .5) {};
    \node[vertex] (p5) at ( 20.7, 1.5) {};

    \begin{scope}[every path/.style={-}]
       \draw (q1) -- (q2);
       \draw (q1) -- (q3);
       \draw (q1) -- (q4);
       \draw (q1) -- (q5);
       \draw[dashed] (q4) -- (q5);
       \draw (p1) -- (p2);
       \draw (p1) -- (p3);
       \draw (p1) -- (p4);
       \draw (p1) -- (p5);
       \draw[dashed] (p4) -- (p5);
       \draw (q1) -- (p1);
     \end{scope} 
\draw [decorate,decoration={brace,amplitude=10pt}] (14.6,-1.8) -- (14.6,1.8) node [midway, xshift = -0.6cm] {\footnotesize $k$};
\draw [decorate,decoration={brace,amplitude=10pt}] (21.4,1.8) -- (21.4,-1.8) node [midway, xshift=0.6cm]  {\footnotesize $l$};
\node at (18,-2.5) { $D_{k,l}$};
  \end{tikzpicture}
}

\captionof{figure}{$T_k$ and $D_{k,l}$}
\label{dt}

\end{figure}
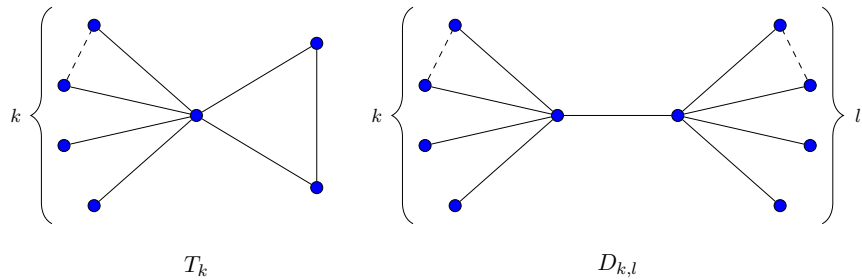

 As in the case of the $P_4$-saturation game, we start by characterising the $P_5$-saturated graphs, which is easily shown by inspection:
\begin{lemma}\label{P5-saturated}
A $P_5$ saturated graph is \emph{either} a vertex-disjoint union of copies of $K_4$, $T_{\geq 0}$, $D_{k, l}$ where $\max(k,l)>0$ and at most one isolated edge \emph{or} a vertex-disjoint union of one isolated vertex and copies of $K_4$.\end{lemma}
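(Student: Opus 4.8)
The plan is to treat a $P_5$-saturated graph $G \subseteq K_n$ component by component and then decide which components may coexist. First I would note that maximality localizes: if an edge inside the vertex set of a single component $C$ could be added without creating a $P_5$, the same edge could be added to $G$, contradicting saturation. Hence every component $C$ is, on its own vertex set, a maximal $P_5$-free graph, and it suffices to (Step 1) classify the connected maximal $P_5$-free graphs and then (Step 2) determine which unions of them assemble into a saturated graph.

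For Step 1 I would split on the order of $C$. A connected graph on at most four vertices contains no $P_5$ whatsoever, so maximality forces it to be complete, giving $K_1$, $K_2$, $K_3\,(=T_0)$ and $K_4$. For $|C| \geq 5$, maximality excludes a longest path on $\leq 3$ vertices (such a $C$ is a star $K_{1,m}$, and joining two of its leaves yields a triangle-with-pendants that is still $P_5$-free, so a star is never maximal), so $C$ has a longest path $v_1 v_2 v_3 v_4$. The key structural observation is that the central edge $v_2 v_3$ is \emph{dominating}: a vertex adjacent to neither $v_2$ nor $v_3$ could be joined to the spine along a shortest connecting path and would thereby extend $v_1 v_2 v_3 v_4$ to a $P_5$. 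Partitioning the remaining vertices into those adjacent to $v_2$ only, to $v_3$ only, or to both, a short analysis of which further edges $P_5$-freeness permits—and which maximality then forces—shows $C$ must be a triangle with pendant edges at a single vertex ($T_k$) or a double star ($D_{k,l}$), with the degenerate shapes ($T_1$, $D_{1,1}$, and the stars $D_{k,0}$) completing up to $K_4$ and hence not appearing as maximal graphs.

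For Step 2 I would quantify reachability from each vertex: for a component $C$ and $v \in C$, let $\ell_C(v)$ be the number of vertices on a longest path of $C$ with $v$ as an endpoint, and set $m_C = \min_{v \in C} \ell_C(v)$. Adding an edge $uv$ between distinct components produces a path on $\ell_C(u) + \ell_{C'}(v)$ vertices, which (as $G$ is $P_5$-free) is the longest path through $uv$; saturation therefore forces $\ell_C(u)+\ell_{C'}(v) \geq 5$ for all such $u,v$, equivalently $m_C + m_{C'} \geq 5$ for every pair of distinct components. A direct computation gives $m = 1$ for $K_1$, $m = 2$ for $K_2$, $m = 3$ for every $T_k$ and $D_{k,l}$, and $m = 4$ for $K_4$. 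Substituting these values into the pairwise inequality yields exactly the two advertised families: an isolated vertex ($m=1$) forces its partner to have $m \geq 4$, so it pairs only with $K_4$'s, and at most one isolated vertex can occur, giving the second alternative; otherwise there is no isolated vertex, at most one isolated edge may appear (two would give $2+2 < 5$), and everything else is a free union of $K_4$'s, $T_k$'s and $D_{k,l}$'s, giving the first alternative.

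I expect the only real work to be in Step 1, specifically the edge-by-edge bookkeeping, once the central edge is known to dominate, that rules out every configuration other than $T_k$, $D_{k,l}$ and $K_4$ and confirms their maximality. Once the per-vertex longest-path minima $m_C$ are tabulated, Step 2 is a finite check, and the assembly of the two global cases is immediate.
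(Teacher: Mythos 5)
Your proof is correct; there is nothing in the paper to compare it against, since the lemma is asserted there as ``easily shown by inspection'' with no argument given, so your write-up actually supplies a missing proof rather than an alternative one. Both of your reductions are sound: a $P_5$ created by an edge inside $V(C)$ lies entirely in $C$ plus that edge, so maximality does localize to components; and the longest path through a new edge $uv$ joining distinct components has exactly $\ell_C(u)+\ell_{C'}(v)$ vertices while all other paths already lie in the $P_5$-free graph $G$, so inter-component saturation is exactly the condition $m_C+m_{C'}\geq 5$. Your table of $m$-values ($1$ for $K_1$, $2$ for $K_2$, $3$ for $T_k$ and $D_{k,l}$, $4$ for $K_4$) is right, and the two alternatives of the lemma fall out as you say. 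Two minor remarks. First, the parenthetical that the stars $D_{k,0}$ ``complete up to $K_4$'' is false for $k\geq 3$; this costs nothing because you had already eliminated stars correctly at the start of the $|C|\geq 5$ case by joining two leaves. Second, carrying out the edge-by-edge check in Step 1 actually yields a sharper statement than the lemma: $D_{k,1}$ is never maximal (joining the lone pendant to the opposite centre produces $T_k$ with no $P_5$), so the connected maximal $P_5$-free graphs on $\geq 5$ vertices are precisely the $T_k$ with $k\geq 2$ and the $D_{k,l}$ with $k,l\geq 2$. The lemma's family ``$D_{k,l}$ with $\max(k,l)>0$'' is therefore generous, but since it only asserts containment in that family, your argument (which establishes the tighter list) proves it.
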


\begin{proof}[Proof of Theorem \ref{P5-bound}]
The upper bound is demonstrated by considering the following strategy for Shortener. She will:
\begin{enumerate}[i)]
\item if there are no isolated vertices in $\G_i$ then join two isolated edges to form a $P_4$ if possible, otherwise
\item \label{form5}extend a $P_4 = D_{1,1}$ to a $D_{1,2}$ or extend a $K_{1,3} = D_{0,2}$ to a $D_{1,2}$ or extend a $T_1$ to a $T_2$ if possible, otherwise
\item \label{formK}extend an isolated edge to a $K_{1,2} = D_{0,1}$ if possible, otherwise
\item \label{ext5}extend a component of 5 or more vertices by attaching to it an isolated vertex if possible, otherwise
\item \label{edge}draw an isolated edge if possible, otherwise
\item join two $K_{1,2}$ components into a $D_{2,2}$ if possible, otherwise
\item play arbitrarily.
\end{enumerate}

\begin{claim}\label{P5-structure}
Given this strategy by Shortener, in any graph $\G_t$, there is either at most one component of size four and at most one isolated edge or, there are no components of size four and at most two isolated edges.
\end{claim}

\begin{proof}
We proceed inductively; clearly the condition holds after Prolonger's first move. Suppose it holds after Prolonger moves to $\G_i$. We split into cases according to the existence of isolated vertices.

Suppose first that there are no isolated vertices in $\G_i$. Then the only way to create a 4-vertex component is to join two isolated edges from $\G_i$. But inductively if $\G_i$ already contains a 4-vertex component then there is at most one isolated edge, so a second 4-vertex component cannot be made. Otherwise there are $\leq 2$ isolated edges, so at most one 4-vertex component can be produced. Hence the lemma is satisfied.

Suppose alternatively there is an isolated vertex in $\G_i$. Then we make the stronger claim that \emph{either} there is at most one $P_4$, $K_{1,3}$ or $T_1$ and at most one isolated edge, \emph{or} there are no components of size 4 and at most 2 isolated edges during any time of the game, \emph{or} we are forming the last component of size 4 in the game. Clearly this holds after Prolonger's first move; we proceed inductively.

If there is a $P_4$, $K_{1,3}$ or $T_1$ in $\G_i$ then Shortener will extend it to a $\geq$ 5-vertex component (Step \ref{form5}). Otherwise, if there is an isolated edge in $\G_i$ then Shortener will extend it to a $K_{1,2}$ (Step \ref{formK}). Note from Lemma \ref{P5-saturated} that an isolated vertex can be attached to any $\geq$ 5-vertex component, so otherwise Shortener will attach a vertex to a $\geq 5$-vertex component(Step \ref{ext5}).

Hence $\G_{i+1}$ contains no component of size 4 and at most one isolated edge. So in $\G_{i+2}$ Prolonger can create at most one new 4-vertex component, which must be a copy of $P_4$, $K_{1,3}$ or $T_1$, or at most one isolated edge. Hence our stronger claim remains true for $\G_{i+2}$.
\end{proof}

By Claim~\ref{P5-structure}, there is at most one $K_4$. By Lemma~\ref{P5-saturated} the number of edges does not exceed the number of vertices in any other component. Hence the game score is at most $n + 2$, showing the claimed upper bound.

Before outlining the argument for the lower bound, we define some additional notions. Let us call a component \emph{trivial} if it consists of an isolated vertex. Let us call a non-trivial component \emph{standalone} if it can not be connected to another non-trivial component without completing a $P_5$, otherwise call it non-standalone. Note that for a component to be non-standalone it has to have a vertex which is not the endpoint of a $P_3$. The only $P_5$ free components which have a vertex which is not the endpoint of a $P_3$ are stars (indeed, the second neighbourhood of such a vertex is empty and the first neighbourhood is an independent set). Hence any other component may only be joined to an isolated vertex, as otherwise a $P_5$ will neccessarily appear.

The lower bound is demonstrated by considering the following strategy for Prolonger. He will:

\begin{enumerate}[(i)]
\item {complete a triangle in a $D_{1,2}$ component to make it a $T_2$ or in a $K_{1,3}$ component to make it a $T_1$, or, if not possible}
\item {complete a triangle in a component without a triangle, or, if not possible}
\item {connect two isolated edges to form a $P_4$, or, if not possible}
\item {complete a $K_{1,2}$ component, or, if not possible}
\item {draw an isolated edge, or, if not possible}
\item play arbitrarily.
\end{enumerate}

\begin{claim}\label{standalone}
Given this strategy for Prolonger, the set of star components after his move may be:
empty; or one isolated edge; or one $K_{1,2}$. After Shortener's move, the set of
non-standalone components may be: empty, $K_{1,2}$, $K_{1,3}$, $K_{1,2}$ and an isolated
edge, two isolated edges, or one isolated edge. After Shortener's move, at most a single one of $D_{1,2}$, $K_{1,2}$ or $K_{1,3}$ components exists; if a $D_{1,2}$ exists then at most one isolated edge does.
\end{claim}

\begin{proof}
We induct on the number of moves. The result holds trivially for $G_0$ and $G_{1}$. If the condition holds after Prolonger's move, it can easily be
checked that Shortener's move can only produce sets of stars as stated in the lemma. After Shortener's move, Prolonger will:
\begin{enumerate}[(i)]
\item complete a $K_3$ from a $K_{1,2}$ component, or produce a $T_1$ in the $K_{1,3}$ component, both of which are standalone; 
\item if not possible, he will complete a $P_4$ from two isolated edges, which is standalone; 
\item if not possible, he will complete a $K_{1,2}$ component from one isolated edge; 
\item if not possible, he will draw an isolated edge; 
\item otherwise, there will be at most one isolated edge and no isolated vertices, and he will play arbitrarily but his move will not extend a star into a larger star (otherwise he could have completed a triangle in it), so his edge will be a part of a standalone component. 
\end{enumerate}
In all cases the set of non-standalone components after Prolonger's move is as described in the claim.
\end{proof}

\begin{claim}\label{triangle}
Given this strategy for Prolonger, in $\G_{t^*}$ all standalone components will contain a triangle. The set of non-standalone components will consist of an isolated vertex or an isolated edge.
\end{claim}

\begin{proof}
We claim by induction that after Prolonger's move, there will be at most one component
of size greater than one without a triangle which will be either an isolated edge, a $K_{1,2}$ or a $P_4$. Clearly this holds for $\G_0$, $\G_{1}$ and for this strategy of Prolonger for $\G_2$.
Suppose it holds for after Prolonger's move to $\G_i$. By Claim~\ref{standalone} there is at
most one star component in $\G_i$, so if Shortener connects two components
one of them is an isolated vertex. So in $\G_{i+1}$ the set of non-trivial
components without a triangle will be empty, a $K_2$, a $K_{1,2}$ or $P_4$ or be one
of the preceding and an isolated edge or be a $K_{1,3}$ or a $D_{1,2}$. In each case,
to form $\G_{i+2}$ Prolonger will:

\begin{enumerate}[(i)]
\item complete a triangle in them to create a $T_2$ component or a $T_1$ component or a $K_3$ component or 
\item connect two isolated edges to form a $P_4$ component or 
\item connect an isolated edge to an isolated vertex to form a $K_{1,2}$ component or 
\item create an isolated edge or 
\item else there is at most one non-trivial component without a triangle which can only be an isolated edge and he can play arbitrarily
\end{enumerate}
so the set of non-trivial components without a triangle in $\G_{i+2}$ consists of an isolated edge, a $K_{1,2}$ or a $P_4$. 

Hence Shortener cannot create $D_{k,l}$ components with both $k,l\geq 2$. By Claim~\ref{standalone} there is at most one star component in $\G_i$, so the component would have to be formed via a $D_{1,2}$ or a $K_{1,3}$ component, which are immediately completed into a $T_2$ or $T_1$ component by Prolonger. Hence at the end of the game the non-trivial components without a triangle will be an isolated vertex or an isolated edge, since the other components cannot be a $D_{k,l}$ with $k, l \geq 2$ in $\G_i$ and thus contain a triangle by Claim~\ref{P5-structure}.
\end{proof}

So by Claim~\ref{triangle} all components in $\G_{t^*}$ will contain a triangle except for
at most one isolated edge or isolated vertex. Hence the number of edges in
these components is greater or equal to the number of vertices. Hence
$\G(K_n, P_5) \geq n - 1$.

\end{proof}

%
%
%
%
\section{Game of avoiding all trees on \texorpdfstring{$k$}{k} vertices}

Recall that $\T_k$ is defined to be the family of all trees on $k$ vertices. Consider the game $\G(K_n,\T_k)$. Clearly, the condition that $G$ is $\T_k$-free  is equivalent to requiring that all connected components of $G$ have less than $k$ vertices. Hence being $\T_k$-saturated implies that all components will be cliques of size at most $k - 1$ with any two components having total size at least $k$.

\begin{proof}[Proof of Theorem \ref{Tree-bound}]
Suppose $G$ is $\mathcal{T}_k$-saturated. Then $e(G)$ is a convex quadratic
function of the clique sizes, and so is maximised when all but one clique is
of size $k - 1$. The upper bounds follow immediately.

To demonstrate the lower bounds, suppose that Prolonger chooses two components with the
greatest total number of vertices such that this number is at most $k-1$ and
connects them by an edge.
\begin{claim}
After Prolonger's move, yielding $\G_i$, either (1) there exists at
most one connected component $C_i \subseteq \G_i$ with $1 < |C_i| < k - 1$, or (2) there is an
isolated edge, a connected component of size $k - 2$ and connected
components of size $k - 1$.\end{claim}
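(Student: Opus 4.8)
The plan is to prove this claim by induction on the number of moves, tracking the component structure carefully as Prolonger plays his stated greedy strategy of joining the two components with the largest total size that stays below $k-1$. Since the claim concerns only the state after Prolonger's move, I would interleave an analysis of Shortener's (arbitrary) intervening move. The base case is clear: after Prolonger's first move the graph has a single edge plus isolated vertices, which satisfies alternative (1) with $C_i$ the isolated edge (assuming $k-1>2$; the small-$k$ cases are checked separately).

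For the inductive step, suppose $\G_i$ (after Prolonger's move) satisfies (1) or (2). I would first let Shortener make an arbitrary legal move to form $\G_{i+1}$; since she must keep the graph $\T_k$-free, she may only create or enlarge components so long as every component stays of size $<k$, equivalently she joins two cliques of total size $<k$ or extends within the allowed bound. The key observation is that Shortener can produce at most one new ``intermediate'' component (of size strictly between a complete $K_{k-1}$ and an isolated vertex), and I would enumerate the resulting possibilities for the sizes of the non-clique, non-trivial components. Then Prolonger responds: because he greedily merges the two largest mergeable components, he will tend to consolidate intermediate components into larger ones, either completing a $K_{k-1}$ or leaving behind a controlled configuration. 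The heart of the argument is to show that after his move the number of ``incomplete'' components (those with $1<|C|<k-1$) collapses back to at most one, except in the single exceptional configuration described in case (2).

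The main steps, in order, would be: (a) characterise precisely which configurations Shortener can reach in one move from a state satisfying (1) or (2), paying attention to how many components of each intermediate size can coexist; (b) for each such configuration, determine the pair of components Prolonger's greedy rule selects and compute the resulting sizes; (c) verify that each resulting state again satisfies (1) or (2). The delicate book-keeping is that merging two intermediate components of sizes $a$ and $b$ yields $a+b$, which lands at $k-1$ (good, a new clique once filled) only when $a+b=k-1$, and otherwise produces a single new intermediate component — so one must check that Prolonger's choice of the \emph{largest} admissible pair never leaves two intermediate components behind. Case (2) should arise exactly when sizes do not combine cleanly, specifically when an isolated edge and a size-$(k-2)$ component cannot be joined (their total $k$ exceeds $k-1$) yet neither can pair with anything else, forcing both to persist alongside the $K_{k-1}$'s.

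The main obstacle I anticipate is step (b)--(c): handling the arithmetic of how intermediate component sizes combine under the greedy rule, and in particular ruling out states with two distinct intermediate components other than the exceptional edge-plus-$(k-2)$ configuration. One must be careful that after Shortener potentially splits off a small component, Prolonger's merge does not accidentally create a second intermediate component; the greedy ``largest total $\le k-1$'' rule is exactly what prevents this, since it always prefers to finish the most nearly-complete clique, but proving this cleanly requires checking that no pair of surviving intermediate components can both have size $<k-1$ with total also $<k-1$ (which would let Prolonger merge them and thus contradict minimality of what remains). I would formalise this by arguing that if two intermediate components coexisted after Prolonger's move with combined size below $k-1$, he would have been obliged by his rule to merge them instead, a contradiction — leaving the size-$(k-2)$-plus-edge case (total $k$, hence unmergeable) as the only genuine exception, which is precisely alternative (2).
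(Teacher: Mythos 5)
Your proposal follows essentially the same route as the paper: induction on moves, observing that from a state satisfying (1) Shortener can create at most one new intermediate component (necessarily an isolated edge) or extend the existing $C_i$, and then checking that Prolonger's greedy merge restores (1) except in the unmergeable edge-plus-$(k-2)$ configuration with no isolated vertices left, which is exactly alternative (2). The case analysis you defer as "delicate book-keeping" is precisely what the paper's proof carries out, and your concluding observation (two intermediate components with combined size at most $k-1$ could not survive Prolonger's move) is the correct justification.
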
 

\begin{proof}
The conditions of (1) hold in $\G_0 = E_n$ and $\G_1 = K_2 \sqcup (n - 2)K_1$. We proceed inductively, and split the analysis of Shortener's move into two cases:

\begin{enumerate}[a)]
\item Shortener connects two isolated vertices to make an isolated edge.
\item Shortener does not form an isolated edge, so either no components are
changed in size or $C_i$ is joined to an isolated vertex $u$.
\end{enumerate} 

If Shortener has formed an isolated edge $uv$, then if $|C_i| \leq k - 3$, Prolonger joins it to $uv$ to satisfy the conditions of (1), with $C_{i+2} = C_i \cup \{u,v\}$. If instead there is an isolated vertex $v$ and $|C_i| = k - 2$, then Prolonger joins it to $v$ to satisfy the conditions of (1). Otherwise no component can be extended and the conditions of (2) are satisfied for the rest of the game.

Suppose Shortener has not formed an isolated edge. Then she must have extended $C_i$ or left the component structure unchanged. If there was a set $C_i$, we say $C_{i+1} = C_i$ if she did not extend the component to $u$, and $C_{i+1} = C_i \cup u$ otherwise. If there are no
isolated vertices then no component can be extended and the conditions of (1) are satisfied for the rest of the game. If there is an isolated vertex $v$ and $C_{i+1}$ exists, with $|C_{i+1}| \leq k-2$, Prolonger joins $C_{i+1}$ to $v$ satisfying the conditions of (1). If there is no set $C_i$ or $|C_{i+1}| = k-1$, then if there are two isolated vertices Prolonger joins them to form $C_{i+2}$ satisfying the conditions of (1). If not, then no component can be extended and the conditions of (1) are satisfied for the rest of the game.
\end{proof}

Hence if $n \not\equiv 1 \mod (k - 1)$ the conditions of (2) cannot hold, and since
$G$ is $\mathcal{T}_k$-saturated at the end of the game there cannot be a component of
size $\leq k - 2$ and an isolated vertex. Hence there are $\lfloor\frac{n}{k-1}\rfloor$ $K_{k-1}$'s and one
further clique, which saturates the upper bound.

If $n \equiv 1 \mod (k - 1)$ and $k \geq 3$, then the conditions of (2) could hold, in
which case precisely $k - 2$ edges are lost from removing a vertex from a
$K_{k-1}$ and 1 is gained from an isolated edge. Hence the bound is $k - 3$
below the upper bound.\end{proof}

\section{Forbidding the graph \texorpdfstring{$K_{1,k+1}$}{K\_\{1,k+1\}}}
In lieu of forbidding the family of all trees $T_{k+2}$, we may merely forbid the graph $K_{1, k+1}$. Trivially this corresponds to requiring that in the process, $\Delta(\G_t) \leq k$. From this, we immediately see that in a $K_{1,k+1}$-saturated graph $G$ we have that $\{v \in G : d(v) < k\}$ must form a clique in $G$, as otherwise we could add an edge without producing a $K_{1,k+1}$. Hence, by minimizing the respective quadratic function, we have that the score $\G(K_n, K_{1,k+1}) \geq \frac{1}{2}nk - \frac12 \left(\frac{k+1}{2}\right)^2 $. This lower bound can be improved somewhat.

\begin{proof}[Proof of Theorem \ref{Star-bound}]
The upper bound follows trivially from the fact that $\Delta(G) \leq k$ in any $K_{1,k+1}$-saturated graph $G$. Let Prolonger have the following strategy: Given a graph $\G_i$ by Shortener, she adds the least edge in $\bar{\G_i}$, where the edges $uv$ of $\bar{\G_i}$ are ordered lexicographically by the minimum degree of $u$ and $v$ and then by the maximal degree. Note first that he will attempt to add edges between vertices of degree $\delta(\G_i)$. If Prolonger is unable to find such an edge, then the vertices of degree $\delta(\G_i)$ must form a clique, and hence there are at most $\delta(\G_i) + 1 \leq k+1$ of them. These final $\leq k+1$ vertices may require their degrees to be increased by adding edges to vertices of degree greater than $\delta(\G_i)$.

Consider the graph process given that Prolonger follows this strategy. Let $t_i$ be least such that $\delta(\G_{t_i}) \geq i$ and Shortener has just played. Let $g_i = \sum_v \max(d_{\G_{t_i}}(v) - i, 0)$, if $t_i$ exists, and $g_i = 0$ otherwise. Suppose that $t_i$ exists and that in $\G_{t_i}$ there are $\lambda_i$ vertices of degree $> i$. Then after at most $\left\lceil \frac{1}{2}(n - i - 1 - \lambda_i)\right\rceil$ moves by Prolonger there are $\leq i+1$ vertices of degree $i$, and after at most another $i+1$ moves by Prolonger there are no vertices of degree $i$, unless the game has ended.  Furthermore, note that by parity considerations if the ceiling in phase one increases the bound then the second phase has only $i$ moves.

For the game to end whilst $\delta(\G_t) = i$ requires that there exist $n-i-1$ vertices with degree $k$. Consider the total degree of these vertices between $t_i$ and the hypothesised game end. Note that over Prolonger's $\leq \frac{1}{2}(n - i - 1 - \lambda_i) + i+1$ moves he adds one edge to every vertex of degree $i$, and at most a further $k+1$ to the total degree. We may assume that Shortener only adds edges amongst the $n-i-1$ vertices.

Note that in the first phase of Prolonger's play, his added edges will increase the degree of a subset of the $n-i-1$ vertices, each of degree $i$, by exactly one. So considering just these moves and the initial degree, the sum of degrees over the $n-i-1$ vertices is $(i+1)(n-i-1) + g_i - \lambda_i$, as the first two terms double count the $i+1^{\textrm{th}}$ edge incident on any vertex of degree $> i$ in $\G_{t_i}$. In Shortener's moves, she adds at most $(n-i-1-\lambda_i)$ to this degree sum. In the second phase of Prolonger's play, every pair of moves by Prolonger and Shortener adds at most 3 edges to these vertices. So for the game to last until Prolonger increases $\delta(G_t)$ to $i+1$, it \emph{suffices} to have:
\[
k(n-i-1) \geq  (i+1)(n-i-1) + (g_i - \lambda_i) + (n - i - 1 - \lambda_i) + 3(i+1)
\]
and by the same degree counting we have:
\[g_{i+1} \leq (g_i - \lambda_i) + (n - i - 1 - \lambda_i) + 3(i+1).\]
Note that $\lambda_i \geq g_i/(k - i)$, as any vertex contributes at most $k-i$ to $g_i$ and $\lambda_i$ counts the number of non-zero contributions to $g_i$. Define:
\[f_0 = 0,\quad f_{i+1} = f_i + (n + 2k + 2) - 2f_i / (k - i).\]
For all $i \leq k-2$, $f_{i+1}$ is increasing in $f_i$. Note that $g_0 = 0$, and hence:
\[g_{i+1} \leq g_i + (n + 2i + 2) - 2\lambda_i \leq g_i + (n + 2k + 2) - \frac{2g_i}{k - i} \leq f_{i+1}\]
for all $i \leq k-2$, with the last inequality following by induction on $i$. Note also that $t_{i+1}$ exists if $(k-i-1)(n-i-1) \geq f_{i+1}$, as we have $f_{i+1} \geq (g_i - \lambda_i) + (n - i - 1 - \lambda_i) + 3(i+1)$ from this inequality. Additionally, we have $f_i = i(n + 2k + 2)\frac{k-i}{k-1}$ by induction. Hence to show that $t_i$ exists for all $i \leq k-2$ it suffices that:
\[i(n + 2k + 2)\frac{k-i}{k-1} \leq (k-i)(n-i) \Leftrightarrow i \leq \frac{n(k-1)}{n+3k+1}\]
holds for all $i \leq k-2$. Hence for 

So for $n \geq (3k+1)(k-2)$, we have that $t_i$ exists for all $i \leq k-2$, and so the minimum degree of the saturated graph is at least $k-2$. Hence $\G(K_n, K_{1,k+1}) \geq \frac{1}{2}\left(kn-2(k-1)\right)$ as required.
\end{proof}

\section{Concluding Remarks}
There remain many interesting open problems, chiefly the resolution of the triangle saturation game. Given a graph $G$ which is not a tree, providing effective bounds on the $\mathcal{G}(K_n, G)$ would be highly desirable. In our results, we show that a careful analysis of the maximal components is of substantive use, and that the supply of low degree vertices controls the ability of both players to enforce conditions on the game. However, our results are strongly predicated on finding explicit strategies; for example, the component structure of $P_6$-saturated graphs is not hard to determine but finding an explicit strategy in this case seems hard.

\end{document}